\newtheorem{theorem}{Theorem}
\theoremstyle{definition}
\newtheorem{remark}[theorem]{Remark}
\newtheorem*{remark*}{Remark}
\newcommand{\N}{\mathbb{N}}  % set of natural numbers
\newcommand{\Z}{\mathbb{Z}}  % set of integer numbers
\newcommand{\C}{\mathbb{C}}  % set of complex numbers
\newcommand{\D}{\mathbb{D}}  % unit disc
\newcommand{\T}{\mathbb{T}}  % unit circle
\newcommand{\eps}{\varepsilon} % abbreviation for epsilon
\newcommand{\Lip}{\operatorname{Lip}}
\begin{document}

% \title[short text for running head]{full title}

\title[A generalized Grobman-Hartman theorem]
      {A generalized Grobman-Hartman theorem}

% Only \author and \address are required; other information is optional.
% Remove any unused author tags.
% \author[short version for running head]{name for top of paper}

\author{Nilson C. Bernardes Jr.}
\address{Departamento de Matem\'atica Aplicada, Instituto de Matem\'atica,
    Universidade Federal do Rio de Janeiro, Caixa Postal 68530,
    Rio de Janeiro, RJ, 21945-970, Brazil.}
\curraddr{}
\email{ncbernardesjr@gmail.com}
\thanks{}

\author{Ali Messaoudi}
\address{Departamento de Matem\'atica, Universidade Estadual Paulista,
    Rua Crist\'ov\~ao Colombo, 2265, Jardim Nazareth, S\~ao Jos\'e do
    Rio Preto, SP, 15054-000, Brasil.}
\curraddr{}
\email{ali.messaoudi@unesp.br}
\thanks{}

\subjclass[2010]{Primary 37C20; Secondary 37C50, 37B99, 47A16.}

\keywords{Linear operators, structural stability, shadowing, hyperbolicity,
linearization}

\date{} %\today

\dedicatory{}

\begin{abstract}
We prove that any generalized hyperbolic operator on any Banach space
is structurally stable. As a consequence, we obtain a generalization
of the classical Grobman-Hartman theorem.
\end{abstract}

\maketitle

%%%%%%%%%%%%%%%%%%%%%%%%%%%%%%%%%%%%%%%%%%%%%%%%%%%%%%%%%%%%%%%%%%%%%%%%%%%%%

\section{Introduction}

Let $E$ be a metric space. Recall that two continuous maps
$\varphi, \, \psi : E \to E$ are {\em topologically conjugate} if there is
a homeomorphism $h : E \to E$ such that $h \circ \varphi = \psi \circ h$.
Given a Banach space $X$, an invertible bounded linear operator $T$ on $X$
is said to be {\em structurally stable} if there exists $\eps > 0$ such that
$T + \varphi$ is topologically conjugate to $T$ whenever $\varphi : X \to X$
is a Lipschitz map with norm
$\|\varphi\|_\infty = \sup_{x \in X} \|\varphi(x)\| \leq \eps$
and Lipschitz constant
$\Lip(\varphi) = \sup_{x \neq y} \frac{\|\varphi(x)-\varphi(y)\|}{\|x-y\|}
               \leq \eps$.
Structural stability is a fundamental notion in the area of dynamical systems.
It was introduced by Andronov and Pontrjagin \cite{AAndLPon37} for a certain
class of differentiable flows on the plane. Nowadays, there are many
variations of this notion in different contexts. In the definition of
structural stability it is usual to consider $C^1$ pertubations with small
$C^1$ norm. Here we are following Pugh \cite{CPug69}, where it is allowed
more general pertubations, namely Lipschitz pertubations with small Lipschitz
norm. We refer the reader to \cite{AKatBHas95,JRob72,MShu87} for nice
expositons about structural stability.

\smallskip
Another fundamental notion in the area of dynamical systems is that of
{\em hyperbolicity}. Recall that a bounded linear operator $T$ on a complex
Banach space $X$ is said to be {\em hyperbolic} if its spectrum $\sigma(T)$
does not intersect the unit circle $\T$ in the comple plane. In the case of
real Banach spaces, it is required that $\sigma(T_\C) \cap \T = \emptyset$,
where $T_\C$ denotes the complexification of $T$. It is well-known that
$T$ is hyperbolic if and only if there are an equivalent norm $\|\cdot\|$
on $X$ and a splitting $X = X_s \oplus X_u$, $T = T_s \oplus T_u$
(the {\em hyperbolic splitting} of $T$), where $X_s$ and $X_u$ are closed
$T$-invariant subspaces of $X$ (the {\em stable} and the {\em unstable
subspaces} for $T$), $T_s = T|_{X_s}$ is a proper contraction
(i.e., $\|T_s\| < 1$), $T_u = T|_{X_u}$ is invertible and is a proper dilation
(i.e., $\|T_u^{-1}\| < 1$), and the identification of $X$ with the product
$X_s \times X_u$ identifies $\|\cdot\|$ with the max norm on the product.

\medskip
Let us recall the following classical result from the 1960's.

\bigskip
\noindent
{\bf Theorem A.} {\it Every invertible hyperbolic operator on a Banach
space is structurally stable}.

\bigskip
This result was originally obtained by Hartman \cite{PHar60} for operators
on finite-dimensional euclidean spaces. The general case was independently
obtained by Palis \cite{JPal68} and Pugh \cite{CPug69}, motivated by an
argument in Moser \cite{JMos69}.

\smallskip
It is natural to ask: {\it Does the converse of Theorem~A hold?}
It was soon realized that the answer is ``yes'' in the finite-dimensional
setting. Indeed, the 1972 paper \cite{JRob72} by Robbin already contains
a proof of this fact. However, the full question was answered only very
recently by Bernardes and Messaoudi \cite{NBerAMes}. In fact, in this paper
it was characterized the invertible weighted shifts on the spaces
$\ell_p(\Z)$ ($1 \leq p < \infty$) and $c_0(\Z)$ that have the shadowing
property and it was proved that all of them are structurally stable;
as a consequence, examples of structurally stable operators that are not
hyperbolic were obtained, answering the above question in the negative.
More precisely, the following result is contained in \cite{NBerAMes}.

\bigskip
\noindent
{\bf Theorem B.} {\it Let $Y = \ell_p(\Z)$ $(1 \leq p < \infty)$ or
$Y = c_0(\Z)$. Let $w = (w_n)_{n \in \Z}$ be a bounded sequence of scalars
with $\inf_{n \in \Z} |w_n| > 0$ and consider the bilateral weighted
backward shift
$$
B_w : (x_n)_{n \in \Z} \in Y \mapsto (w_{n+1}x_{n+1})_{n \in \Z} \in Y.
$$
If
\begin{equation}\label{Ineq}
\lim_{n \to \infty} \sup_{k \in \N}
  |w_{-k} w_{-k-1} \cdot \ldots \cdot w_{-k-n}|^\frac{1}{n} < 1
\ \text{ and } \
\lim_{n \to \infty} \inf_{k \in \N}
  |w_k w_{k+1} \cdot \ldots \cdot w_{k+n}|^\frac{1}{n} > 1,
\end{equation}
then $B_w$ is structurally stable and not hyperbolic.}

\bigskip
In the present work we will obtain a result that unifies Theorems~A and~B.
In order to be more precise, recall that an invertible bounded linear
operator $T$ on $X$ is said to be {\em generalized hyperbolic} if we can
write
\begin{equation}\label{eq1}
X = M \oplus N,
\end{equation}
where $M$ and $N$ are closed subspaces of $X$ such that $T(M) \subset M$,
$T^{-1}(N) \subset N$,
\begin{equation}\label{eq2}
\sigma(T|_M) \subset \D \ \ \text{ and } \ \ \sigma(T^{-1}|_N) \subset \D,
\end{equation}
where $\D$ denotes the open unit disc in the complex plane.
This class of operators was introduced by Bernardes et al.\
\cite{BerCirDarMesPuj18}, where it was proved that each element of this
class has the shadowing property. But the terminology ``generalized
hyperbolic'' was given by Cirilo et al.\ \cite{CirGolPuj}, where it was
proved that this class is open in the space of all invertible bounded linear
operators. It is clear that this class contains the invertible hyperbolic
operators. It also contains the invertible weighted shifts from Theorem~B.
In order to see this, it is enough to consider
$$
M = \{(x_n)_{n \in \Z} \in Y : x_n = 0 \text{ for all } n > 0\},
$$
$$
N = \{(x_n)_{n \in \Z} \in Y : x_n = 0 \text{ for all } n \leq 0\},
$$
and to observe that the spectral radius formula shows that the estimates
in (\ref{Ineq}) give the inclusions in (\ref{eq2}). We will prove in
Section~\ref{GH} that every generalized hyperbolic operator on a Banach
space is structurally stable, which unifies Theorems~A and~B.
The class of generalized hyperbolic operators contains all the structurally
stable operators that are known up to now. It is an open problem whether
or not every structurally stable operator lies in this class.

\smallskip
The classical {\em Grobman-Hartman theorem} asserts that if $p$ is a
hyperbolic fixed point of a $C^1$ diffeomorphism $F$ on a Banach space $X$,
then there is a neighborhood of $p$ where $F$ is topologically conjugate to
its derivative at $p$. This {\em linearization theorem} was independently
obtained by Grobman \cite{DGro62} (announced in \cite{DGro59}) and Hartman
\cite{PHar60,PHar63} in the finite-dimensional setting. The extension
to Banach spaces is due independently to Palis \cite{JPal68} and Pugh
\cite{CPug69}. The Grobman-Hartman theorem plays a major role in the areas
of dynamical systems and differential equations. We refer the reader to
\cite{JGucPHol83,AKatBHas95,WMelJPal82,MShu87} for more details on this
important theorem and its applications.

\smallskip
In Section \ref{Grobman-Hartman} we will obtain a generalization of the
Grobman-Hartman theorem by showing that we can replace the hyperbolicity
hypothesis on the fixed point by generalized hyperbolicity. Moreover,
we will prove that, even in this more general case, the homeomorphism
conjugating the map and its derivative at the fixed point can be
chosen to be $\theta$-H\"older (for suitable values of $\theta$) near the
fixed point.

%%%%%%%%%%%%%%%%%%%%%%%%%%%%%%%%%%%%%%%%%%%%%%%%%%%%%%%%%%%%%%%%%%%%%%%%%%%%%

\section{Generalized hyperbolic operators are structurally stable}\label{GH}

Given Banach spaces $X$ and $Y$, we denote by $U_b(X;Y)$ the Banach space of
all bounded uniformly continuous maps $\varphi : X \to Y$ endowed with the
supremum norm. In the case $X = Y$, we write $U_b(X)$ instead of $U_b(X;X)$.

\smallskip
Our goal in this section is to prove that generalized hyperbolicity implies
structural stability. Actually, we will obtain a formally stronger property,
namely: strong structural stability. Recall that an invertible bounded linear
operator $T$ on a Banach space $X$ is said to be {\em strongly structurally
stable} if for every $\gamma > 0$ there exists $\eps > 0$ such that the
following property holds: for any Lipschitz map $\varphi \in U_b(X)$ with
$\|\varphi\|_\infty \leq \eps$ and $\Lip(\varphi) \leq \eps$,
there is a homeomorphism $h : X \to X$ such that
$h \circ T = (T + \varphi) \circ h$ and $\|h - I\|_\infty \leq \gamma$.
So, it is now required that the homeomorphism $h$ conjugating $T$ and
$T + \varphi$ is close to the identity operator. Although this notion
is formally stronger than structural stability, it is still an open problem
whether or not these two notions are equivalent.

\begin{theorem}\label{t1}
Every generalized hyperbolic operator on a Banach space is strongly
structurally stable.
\end{theorem}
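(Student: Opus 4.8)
The plan is to prove strong structural stability directly via the shadowing-type fixed-point method that underlies the Palis-Pugh approach, adapted to the generalized hyperbolic splitting $X = M \oplus N$. The key structural fact I would extract first is that, after passing to an equivalent norm, the operator $T$ acts as a proper contraction on the stable part and its inverse acts as a proper contraction on the unstable part: since $\sigma(T|_M) \subset \D$ and $\sigma(T^{-1}|_N) \subset \D$, the spectral radius formula gives constants $a < 1$ and an equivalent norm (the max norm on $M \times N$) in which $\|T|_M\| \le a$ and $\|(T|_N)^{-1}\| \le a$. Thus the only essential difference from the classical hyperbolic case is that $T|_N$ need not be bounded below a priori, but its inverse $T^{-1}|_N$ is a contraction, which is exactly what the construction needs.

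\smallskip
Next I would set up the conjugacy equation. Writing the desired homeomorphism as $h = I + u$ with $u \in U_b(X)$, the intertwining relation $h \circ T = (T+\varphi)\circ h$ becomes, after rearranging,
\begin{equation*}
u \circ T - T \circ u = \varphi \circ (I + u).
\end{equation*}
The natural move is to split $u = (u_s, u_n)$ according to $X = M \oplus N$ and to solve the two components separately. On the stable component one rewrites the equation as $u_s = T|_M \circ u_s \circ T^{-1} + (\text{lower order})$, whose solution is obtained by iterating; on the unstable component one uses $T^{-1}|_N$ as the contracting factor and iterates in the forward direction. I would then define a map $\mathcal{F}$ on the closed ball of radius $\gamma$ in $U_b(X)$ by assembling these two pieces, with each piece given by a convergent Neumann-type series in which the small factors $\|\varphi\|_\infty \le \eps$ and $\Lip(\varphi) \le \eps$ guarantee that $\mathcal{F}$ maps the ball into itself and is a contraction. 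The unique fixed point yields a $u$ with $\|u\|_\infty \le \gamma$, hence $\|h - I\|_\infty \le \gamma$, which is precisely the strong structural stability estimate once $\eps$ is chosen small enough in terms of $\gamma$.

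\smallskip
Having produced $h$ solving $h \circ T = (T+\varphi)\circ h$, it remains to show $h$ is a homeomorphism. Here I would run the symmetric construction: since the class of generalized hyperbolic operators is open (by Cirilo et al., cited in the excerpt), for $\eps$ small the perturbed map $T + \varphi$ is itself generalized hyperbolic, so the same argument applied with the roles of $T$ and $T+\varphi$ interchanged produces a map $g = I + v$ with $\|v\|_\infty \le \gamma$ satisfying $g \circ (T+\varphi) = T \circ g$. Then $h \circ g$ and $g \circ h$ both conjugate a map to itself and lie within bounded distance of the identity; a uniqueness argument for the conjugacy equation (the same contraction that gave existence forces the only bounded solution of $w \circ T = T \circ w$ near the identity to be $w = I$) shows $h \circ g = g \circ h = I$, so $h^{-1} = g$ is continuous.

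\smallskip
The main obstacle I anticipate is the unstable component. In the classical hyperbolic case both $T|_{X_s}$ and $(T|_{X_u})^{-1}$ are proper contractions with genuine two-sided control, but here $T|_N$ is only known to have contracting inverse, and the perturbation $\varphi$ mixes the $M$ and $N$ directions once one composes with $I + u$. Keeping the two Neumann series convergent simultaneously, while controlling the off-diagonal coupling introduced by $\varphi$, is the delicate point; the Lipschitz smallness $\Lip(\varphi) \le \eps$ is what tames this coupling, and verifying that the assembled operator $\mathcal{F}$ is a genuine contraction on $U_b(X)$ — rather than merely a self-map — is where the careful estimates must be carried out.
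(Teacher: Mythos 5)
Your existence argument follows essentially the same route as the paper: inverting $\varphi \mapsto \varphi\circ T - T\circ\varphi$ by the two Neumann series adapted to the splitting $X=M\oplus N$ (one series using $T^kP_M$, the other $T^{-k}P_N$) and then applying Banach's contraction principle is precisely Steps 1--2 of the paper's proof, and your smallness condition on $\eps$ in terms of $\gamma$ gives the bound $\|h-I\|_\infty\le\gamma$ just as there. The genuine gap is in the invertibility of $h$, and it occurs exactly at the point the paper's own remark flags as the essential novelty. First, you cannot invoke the openness of the class of generalized hyperbolic operators to conclude that $T+\varphi$ is generalized hyperbolic: that result concerns bounded \emph{linear} operators, and $T+\varphi$ is a nonlinear Lipschitz map. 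The paper sidesteps this entirely by observing that the reverse equation $H'\circ(T+\varphi)=T\circ H'$ is \emph{linear} in $h'=H'-I$: it reads $\Psi_2(h')=-\varphi$, where $\Psi_2(\psi)=\psi\circ(T+\varphi)-T\circ\psi$, and the inversion lemma of Step 1 applies with any uniform homeomorphism (not necessarily linear) composed on the right. No hyperbolicity of the perturbed map is needed.

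Second, and more seriously, your uniqueness claim --- that the only solution of $w\circ T=T\circ w$ with $w-I$ bounded is $w=I$ --- is false for generalized hyperbolic operators that are not hyperbolic; this is precisely why the conjugacy is non-unique in this setting. Concretely, for the weighted shift $B_w$ on $\ell_p(\Z)$ with weights $2$ for positive indices and $1/2$ otherwise, the vector $e$ with $e_n=2^{-|n|}$ satisfies $B_we=e$, so the translation $x\mapsto x+e$ is a non-identity homeomorphism commuting with $B_w$ whose distance to the identity is bounded. The fix, which is the heart of the paper's proof, is to work throughout in $U_b(X;Y)$ with $Y=M+T^{-1}(N)$ --- a proper closed subspace of $X$ whenever $T^{-1}(N)\subsetneq N$ (in the shift example $Y=\{x:x_1=0\}$, and $e\notin Y$). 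The map $\varphi\mapsto\varphi\circ T-T\circ\varphi$ is injective on $U_b(X;Y)$ but not on $U_b(X)$, and establishing that injectivity requires the careful induction showing that the partial sums' components land in $M$ and $T^{-1}(N)$ respectively. One must then verify that $h$, $h'$, and the composites $h+h'\circ(I+h)$ all take values in $Y$ so that this uniqueness applies to give $H'\circ H=I$; for the other composition $H\circ H'=I$ a further argument is needed (the paper shows $v\mapsto\Psi_2^{-1}(\beta\circ(I+v)-\beta)$ is a contraction fixing $0$), since one cannot appeal to uniqueness of maps commuting with the nonlinear map $T+\varphi$. Without identifying the subspace $Y$, your closing argument does not go through.
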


\begin{remark}
An important difference between the proofs of Theorem~\ref{t1} and Theorem~A
is that in case where the operator is generalized hyperbolic and not
hyperbolic, the conjugation $H$ is not unique and we have to choose
$H = Id+h$ where $h$ belongs to an adequate space of functions.
\end{remark}

\begin{proof}
Let $T$ be a generalized hyperbolic operator on a Banach space $X$ and
let $M$ and $N$ be as in (\ref{eq1}) and (\ref{eq2}). Let
$$
P_M : X \to M \ \ \ \text{ and } \ \ \ P_N : X \to N
$$
be the projections associated to the decomposition of $X$ given
by (\ref{eq1}), and put
$$
d = \max\{\|P_M\|,\|P_N\|\}.
$$
By (\ref{eq2}) and the spectral radius formula, there are constants
$c \geq 1$ and $0 < t < 1$ such that
\begin{equation}\label{eq3}
\|T^ny\| \leq c\,t^n\|y\| \ \text{ and } \ \|T^{-n}z\| \leq c\,t^n\|z\|
\ \text{ whenever } n \in \N_0, y \in M \text{ and } z \in N.
\end{equation}
Consider the closed subspace $Y = M + T^{-1}(N)$ of $X$.
In order to prove that $T$ is strongly structurally stable, we fix
$0 < \gamma < 1$ and put
$$
\eps = \frac{\gamma\,(1-t)}{c\,d\,(1+t)}\cdot
$$
Let $\beta \in U_b(X)$ be a Lipschitz map with $\|\beta\|_\infty \leq \eps$
and $\Lip(\beta) \leq \eps$. Put $S = T + \beta$. We have to find a
homeomorphism $H : X \to X$ such that $H \circ T = S \circ H$ and
$\|H - I\|_\infty \leq \gamma$. Actually, our $H$ will be a uniform
homeomorphism of the form $H = I + h$, where $h \in U_b(X;Y)$ and
$\|h\|_\infty \leq \gamma$. We divide the remaining of the proof in
five steps.

\medskip
\noindent {\bf Step 1.} {\it For any uniform homeomorphism $R : X \to X$,
the bounded linear map
$$
\Psi: \varphi \in U_b(X;Y) \mapsto \varphi \circ R - T \circ \varphi \in U_b(X)
$$
is bijective. Moreover, its inverse is given by
\begin{equation}\label{eq4}
\Psi^{-1}(\alpha)(x) = \sum_{k=0}^\infty T^k P_M(\alpha(R^{-k-1}x))
                     - \sum_{k=1}^\infty T^{-k} P_N(\alpha(R^{k-1}x)).
\end{equation}
In particular,}
\begin{equation}\label{eq5}
\|\Psi^{-1}(\alpha)\|_\infty \leq \frac{c\,d\,(1+t)}{1-t}\, \|\alpha\|_\infty.
\end{equation}

\medskip
Indeed, fix $\alpha \in U_b(X)$ and suppose that $\varphi \in U_b(X;Y)$
satisfies $\Psi(\varphi) = \alpha$, that is,
\begin{equation}\label{eq6}
\varphi(Rx) - T(\varphi(x)) = \alpha(x) \ \ \text{ for all } x \in X.
\end{equation}
Then, a simple induction argument shows that
\begin{equation}\label{eq7}
\varphi(R^nx) = T^n(\varphi(x)) + \sum_{k=1}^n T^{n-k}(\alpha(R^{k-1}x))
  \ \ \text{ for all } n \in \N.
\end{equation}
By applying $T^{-n}$ to both sides of the above equality, we obtain
\begin{equation}\label{eq8}
\varphi(x) = T^{-n}(\varphi(R^nx)) - \sum_{k=1}^n T^{-k}(\alpha(R^{k-1}x))
           = y_n(x) + z_n(x),
\end{equation}
where
\begin{align*}
y_n(x) &= T^{-n}P_M(\varphi(R^nx)) - \sum_{k=1}^n T^{-k}P_M(\alpha(R^{k-1}x)),\\
z_n(x) &= T^{-n}P_N(\varphi(R^nx)) - \sum_{k=1}^n T^{-k}P_N(\alpha(R^{k-1}x)).
\end{align*}
It is clear that $z_n(x) \in T^{-1}(N)$ for all $n \in \N$ and $x \in X$.
We claim that $y_n(x) \in M$ for all $n$ and $x$. For this purpose, write
$$
y_n(x) = a_n(x) + b_n(x)
         \ \text{ with } a_n(x) \in M \text{ and } b_n(x) \in N.
$$
Consider the case $n = 1$. We have that
$$
T(y_1(x)) = P_M(\varphi(Rx)) - P_M(\alpha(x)) \in M,
$$
and so $T(b_1(x)) = T(y_1(x)) - T(a_1(x)) \in M$ as well.
Since $\varphi(x) = a_1(x) + (b_1(x)+z_1(x))$, $a_1(x) \in M$ and
$b_1(x)+z_1(x) \in N$, the fact that $\varphi(x) \in Y$ implies that
$b_1(x)$ must belong to the set $T^{-1}(M) \cap T^{-1}(N) = \{0\}$, that is,
$y_1(x) \in M$. Now, suppose that for a certain $n \geq 1$, we have that
$y_n(x) \in M$ for all $x$. Then,
$$
T(y_{n+1}(x)) = y_n(Rx) - P_M(\alpha(x)) \in M.
$$
By arguing as above, we conclude that $b_{n+1}(x) = 0$, that is,
$y_{n+1}(x) \in M$. By induction, our claim is proved.
Thus, (\ref{eq8}) gives $P_N(\varphi(x)) = z_n(x)$ for all $n \in \N$.
Since $T^{-n}P_N(\varphi(R^nx)) \to 0$ as $n \to \infty$, we obtain
\begin{equation}\label{eq9}
P_N(\varphi(x)) = - \sum_{k=1}^\infty T^{-k}P_N(\alpha(R^{k-1}x))
  \ \ \ (x \in X).
\end{equation}
Now, if we apply (\ref{eq7}) with $R^{-n}x$ in place of $x$, we get
\begin{equation}\label{eq10}
\varphi(x) = T^n(\varphi(R^{-n}x)) + \sum_{k=0}^{n-1} T^k(\alpha(R^{-k-1}x))
           = y'_n(x) + z'_n(x),
\end{equation}
where
\begin{align*}
y'_n(x) &= T^nP_M(\varphi(R^{-n}x)) + \sum_{k=0}^{n-1} T^kP_M(\alpha(R^{-k-1}x)),\\
z'_n(x) &= T^nP_N(\varphi(R^{-n}x)) + \sum_{k=0}^{n-1} T^kP_N(\alpha(R^{-k-1}x)).
\end{align*}
It is clear that $y'_n(x) \in M$ for all $n \in \N$ and $x \in X$.
We claim that $z'_n(x) \in T^{-1}(N)$ for all $n$ and $x$.
Indeed, consider the case $n = 1$. Since $\varphi(R^{-1}x) \in Y$,
we can write $\varphi(R^{-1}x) = a + b$ with $a \in M$ and
$b \in T^{-1}(N)$. Hence,
$$
z'_1(x) = TP_N(\varphi(R^{-1}x)) + P_N(\alpha(R^{-1}x))
        = Tb + P_N(\alpha(R^{-1}x)) \in N,
$$
which implies that $z'_1(x) \in T^{-1}(N)$, because $\varphi(x) \in Y$.
Now, suppose that for a certain $n \geq 1$, we have that
$z'_n(x) \in T^{-1}(N)$ for all $x$. Then,
$$
z'_{n+1}(x) = T(z'_n(R^{-1}x)) + P_N(\alpha(R^{-1}x)) \in N,
$$
which implies that $z'_{n+1}(x) \in T^{-1}(N)$, since $\varphi(x) \in Y$.
This proves our second claim.
Hence, (\ref{eq10}) gives $P_M(\varphi(x)) = y'_n(x)$ for all $n \in \N$.
Since $T^nP_M(\varphi(R^{-n}x)) \to 0$ as $n \to \infty$, we obtain
\begin{equation}\label{eq11}
P_M(\varphi(x)) = \sum_{k=0}^\infty T^kP_M(\alpha(R^{-k-1}x))
  \ \ \ (x \in X).
\end{equation}
By (\ref{eq9}) and (\ref{eq11}), $\varphi$ must be unique. On the other hand,
the estimates
\begin{equation}\label{eq12}
\sum_{k=1}^\infty \|T^{-k}P_N(\alpha(R^{k-1}x))\|
  \leq  \frac{c\,d\,t}{1-t}\, \|\alpha\|_\infty,
\end{equation}
\begin{equation}\label{eq13}
\sum_{k=0}^\infty \|T^kP_M(\alpha(R^{-k-1}x))\|
  \leq \frac{c\,d}{1-t}\, \|\alpha\|_\infty
\end{equation}
show that the series in (\ref{eq9}) and (\ref{eq11}) converge absolutely
and uniformly on $X$. Therefore, if we define $\varphi : X \to Y$
by means of equations (\ref{eq9}) and (\ref{eq11}), we obtain a map
$\varphi \in U_b(X;Y)$. Moreover, an easy computation shows that (\ref{eq6})
holds, that is, $\Psi(\varphi) = \alpha$. This shows that $\Psi$ is bijective
and that $\Psi^{-1}$ is given by (\ref{eq4}). Finally, the estimate
(\ref{eq5}) follows immediately from the estimates (\ref{eq12}) and
(\ref{eq13}), which completes the proof of Step~1.

\medskip
\noindent {\bf Step 2.} {\it There is a unique $h \in U_b(X;Y)$ such that
the uniformly continuous map $H = I+h : X \to X$ satisfies
\begin{equation}\label{eq14}
H \circ T = S \circ H.
\end{equation}
Moreover,}
\begin{equation}\label{eq15}
\|H - I\|_\infty = \|h\|_\infty \leq \gamma.
\end{equation}

\medskip
We apply Step~1 with $R = T$ to obtain the linear isomorphism
$$
\Psi_1 : \varphi \in U_b(X;Y) \mapsto \varphi \circ T - T \circ \varphi
         \in U_b(X).
$$
Since (\ref{eq14}) is equivalent to $h = \Psi_1^{-1}(\beta \circ (I+h))$,
we have that $h \in U_b(X;Y)$ has the desired property if and only if
it is a fixed point of the map
$$
\Phi_1 : \varphi \in U_b(X;Y) \mapsto \Psi_1^{-1}(\beta \circ (I+\varphi))
         \in U_b(X;Y).
$$
Since $\Phi_1$ is Lipschitz with
$$
\Lip(\Phi_1) \leq \|\Psi_1^{-1}\| \Lip(\beta)
             \leq \frac{c\,d\,(1+t)}{1-t}\, \eps < 1
$$
(where we have used (\ref{eq5}) and our choice of $\eps$), the existence
and uniqueness of $h$ follows from Banach's contraction principle. Moreover,
$$
\|h\|_\infty = \|\Phi_1(h)\|_\infty \leq \|\Psi_1^{-1}\| \|\beta\|_\infty
             \leq \frac{c\,d\,(1+t)}{1-t}\, \eps = \gamma,
$$
which gives (\ref{eq15}).

\medskip
\noindent {\bf Step 3.} {\it There is a unique $h' \in U_b(X;Y)$ such that
the uniformly continuous map $H' = I+h' : X \to X$ satisfies}
\begin{equation}\label{eq16}
H' \circ S = T \circ H'.
\end{equation}

\medskip
We apply Step~1 with $R = S$ to obtain the linear isomorphism
$$
\Psi_2 : \varphi \in U_b(X;Y) \mapsto \varphi \circ S - T \circ \varphi
         \in U_b(X).
$$
In this case, a simple computation shows that (\ref{eq16}) is equivalent to
$\Psi_2(h') = -\beta$. Thus, $h' = \Psi_2^{-1}(-\beta)$ is the only solution.

\medskip
\noindent {\bf Step 4.} $H' \circ H = I$.

\medskip
By (\ref{eq14}) and (\ref{eq16}),
\begin{equation}\label{eq17}
H' \circ H \circ T = T \circ H' \circ H.
\end{equation}
Since $H' \circ H = I + u$ with $u \in U_b(X;Y)$, (\ref{eq17}) gives 
$\Psi_1(u) = 0$, and so $u = 0$.

\medskip
\noindent {\bf Step 5.} $H \circ H' = I$.

\medskip
By (\ref{eq14}) and (\ref{eq16}),
\begin{equation}\label{eq18}
H \circ H' \circ S = S \circ H \circ H'.
\end{equation}
Write $H \circ H' = I + v$ with $v \in U_b(X;Y)$. A simple computation
shows that (\ref{eq18}) is equivalent to
$v = \Psi_2^{-1}(\beta \circ (I+v) - \beta)$, that is, $v$ is a fixed point
of the map
$$
\Phi_2 : \varphi \in U_b(X;Y) \mapsto \Psi_2^{-1}((\beta \circ (I+\varphi)) - \beta)
         \in U_b(X;Y).
$$
As before, $\Phi_2$ is a contraction, and so $\Phi_2$ has $v$ as its unique
fixed point. However, now we have $\Phi_2(0) = 0$. Thus, $v = 0$, as was
to be shown.

\medskip
Finally, Steps 4 and 5 show that $H$ is a uniform homeomorphism, completing
the proof that $T$ is strongly structurally stable.
\end{proof}

%%%%%%%%%%%%%%%%%%%%%%%%%%%%%%%%%%%%%%%%%%%%%%%%%%%%%%%%%%%%%%%%%%%%%%%%%%%%%

\section{A generalized Grobman-Hartman theorem}\label{Grobman-Hartman}

Let $X$ be a Banach space and $F: X \to X$ be a differentiable map.
Suppose that $p$ is a fixed point of $F$.
We say that $p$ is a {\em generalized hyperbolic fixed point} of $F$ if
the derivative $DF_p$ of $F$ at $p$ is a generalized hyperbolic operator
on $X$, that is, there is a splitting
$$
X = E_p^- \oplus E_p^+,
$$
where $E_p^-$ and $E_p^+$ are closed subspaces of $X$ with
$DF_p(E_p^+) \subset E_p^+$, $(DF_p)^{-1}(E_p^-) \subset E_p^-$,
\begin{equation}\label{equa2}
\sigma(DF_p|_{E_p^+}) \subset \D \ \ \text{ and } \ \
\sigma((DF_p)^{-1}|_{E_p^-}) \subset \D.
\end{equation}

\smallskip
Let $U$ and $V$ be open subsets of $X$ and let $\theta > 0$.
Recall that a homeomorphism $H : U \to V$ is said to be
{\em $\theta$-H\"older} if there is a constant $c > 0$ such that
$$
\|H(x) - H(x')\| \leq c\, \|x - x'\|^\theta
\ \text{ for all } x,x' \in U
$$
and
$$
\|H^{-1}(y) - H^{-1}(y')\| \leq c\, \|y - y'\|^\theta
\ \text{ for all } y,y' \in V.
$$

\smallskip
As a consequence of Theorem \ref{t1}, we will now obtain a generalization
of the Grobman-Hartman theorem to the case of generalized hyperbolic
fixed points of $C^1$ diffeomorphisms on Banach spaces. We will also
show that the linearization can be chosen to be $\theta$-H\"older near
the fixed point, provided $\theta > 0$ is small enough.

\begin{theorem}\label{t2}
Let $X$ be a Banach space and $F: X \to X$ be a $C^1$ diffeomorphism.
If $p$ is a generalized hyperbolic fixed point of $F$, then $F$ is
topologically conjugated to $DF_p$ near $p$, that is, there exist
a homeomorphism $H : X \to X$ and an open neighborhood $U$ of $p$ in $X$
such that
$$
H \circ F = DF_p \circ H \ \text{ on } U.
$$
Moreover, for $\theta > 0$ small enough, we have that $U$ and $H$ can be
chosen so that $H : U \to H(U)$ is a $\theta$-H\"older homeomorphism.
\end{theorem}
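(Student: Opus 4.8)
The plan is to deduce Theorem~\ref{t2} from Theorem~\ref{t1} by a localization argument, and then to read off the H\"older regularity from the explicit formula~\eqref{eq4}. First I would translate so that $p = 0$, replacing $F$ by $\tau \circ F \circ \tau^{-1}$ with $\tau(x) = x - p$; this leaves $DF_p$ unchanged and makes $0$ a fixed point. Writing $T = DF_p$ and $\varphi = F - T$, the hypotheses give $\varphi(0) = 0$ and $D\varphi_0 = 0$, so by continuity of $DF$ the Lipschitz constant of $\varphi$ on the ball $B(0,r)$ tends to $0$ as $r \to 0$. Fixing a norm-based Lipschitz bump $\rho : X \to [0,1]$ with $\rho \equiv 1$ on $B(0,r/2)$ and $\rho \equiv 0$ off $B(0,r)$ (such $\rho$, e.g. a Lipschitz function of $\|x\|$, exists on every Banach space), I set $\beta = \rho\,\varphi \in U_b(X)$. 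A routine estimate shows that $\|\beta\|_\infty$ and $\Lip(\beta)$ can be made as small as we wish by shrinking $r$: the leading term is killed by $D\varphi_0 = 0$, and $\Lip(\rho) = O(1/r)$ is compensated by $\|\varphi\| = O(r)$ on the support.

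Next I would invoke Theorem~\ref{t1}. Given $\gamma \in (0,1)$ it provides $\eps > 0$; choosing $r$ so small that $\|\beta\|_\infty \le \eps$ and $\Lip(\beta) \le \eps$, Theorem~\ref{t1} yields a homeomorphism $G = I + h$ of $X$ with $G \circ T = S \circ G$ and $\|G - I\|_\infty \le \gamma$, where $S = T + \beta$. Setting $H = G^{-1}$, the proof of Theorem~\ref{t1} shows $H = I + h'$ with $h' = \Psi_2^{-1}(-\beta)$ and $H \circ S = T \circ H$. On the open ball $U = B(0,r/2)$ we have $\rho \equiv 1$, hence $S = F$ there, so $H \circ F = H \circ S = T \circ H = DF_p \circ H$ on $U$. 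Undoing the translation gives the desired homeomorphism and neighbourhood; this part is essentially Theorem~\ref{t1} plus the cutoff.

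For the H\"older claim, the crux is a regularity lemma for the inverse $\Psi^{-1}$ of~\eqref{eq4}. Writing $|\cdot|_\theta$ for the $\theta$-H\"older seminorm, I would prove that there is a threshold $\theta_0 > 0$, determined by $t$ and the bi-Lipschitz constants of $T$ and $S$, such that for $0 < \theta < \theta_0$ the operator $\Psi$ of Step~1 (with $R = T$ or $R = S$) satisfies $|\Psi^{-1}(\alpha)|_\theta \le K\,|\alpha|_\theta$ on bounded $\theta$-H\"older maps $\alpha$. This is the standard telescoping estimate: the $k$-th term of each series in~\eqref{eq4} is bounded on the one hand by the decay $c\,d\,t^k$ from~\eqref{eq3}, and on the other by $|\alpha|_\theta$ times the growth $\Lip(R^{\mp 1})^{k\theta}$ of the shifted argument; summing the two competing bounds converges exactly when $t\,\Lip(R^{\pm 1})^{\theta} < 1$, i.e. when $\theta < \theta_0$. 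Since a bounded Lipschitz map is bounded $\theta$-H\"older for every $\theta \in (0,1]$ (interpolate its sup and Lipschitz bounds), $\beta$ is bounded $\theta$-H\"older, so $h' = \Psi_2^{-1}(-\beta)$ is globally $\theta$-H\"older; as $I$ is Lipschitz on the bounded set $U$, the map $H = I + h'$ is $\theta$-H\"older on $U$.

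The main obstacle is the reverse direction, the H\"older regularity of $H^{-1} = G = I + h$, because $h$ is given not by a closed formula but only as the fixed point of $\Phi_1$, i.e. $h = \Psi_1^{-1}(\beta \circ (I + h))$, so the lemma does not apply directly. I would resolve this by an invariant-set argument rather than re-running the contraction in a H\"older norm. Consider $B = \{\varphi \in U_b(X;Y) : \|\varphi\|_\infty \le \gamma,\ |\varphi|_\theta \le \Lambda\}$, which is closed in the supremum norm since the seminorm bound passes to uniform limits. The lemma gives $|\Phi_1(\varphi)|_\theta \le K\big(\Lip(\beta)(1 + |\varphi|_\theta) + 2\|\beta\|_\infty\big)$, so if $\eps$ is small enough that $K\,\Lip(\beta) < 1$, then for $\Lambda$ large enough one has $\Phi_1(B) \subset B$. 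Since $\Phi_1$ is a supremum-norm contraction (Theorem~\ref{t1}, Step~2) and $B$ is complete in that norm, its unique fixed point $h$ lies in $B$, whence $|h|_\theta \le \Lambda < \infty$. Therefore $H^{-1} = I + h$ is $\theta$-H\"older on the bounded set $H(U)$, and $H : U \to H(U)$ is the required $\theta$-H\"older homeomorphism. The constants are chosen in the order $\gamma$, then $\theta \in (0,\theta_0)$ (which fixes $K$), then $\eps$ small enough for both Theorem~\ref{t1} and $K\,\Lip(\beta) < 1$, and finally the cutoff radius $r$.
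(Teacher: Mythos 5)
Your proposal is correct and follows essentially the same route as the paper: localize at $p=0$, cut off the nonlinearity to get a globally small Lipschitz perturbation $\beta$, apply Theorem~\ref{t1} with $S=T+\beta$, identify $F$ with $S$ near $0$, and extract the H\"older regularity from the series formula \eqref{eq4} via the competing bounds $t^k$ versus $\Lip(R^{\mp1})^{k\theta}$, exactly as in the paper's condition \eqref{cara}. Two points of divergence are worth recording. First, you build $\beta$ with a Lipschitz bump $\rho(\|x\|)$ rather than invoking the extension lemma from Palis that the paper cites; both are standard and both work on an arbitrary Banach space. Second, and more substantively, you treat the H\"older regularity of the \emph{inverse} conjugacy with genuine care: the paper disposes of it with the remark that ``$h'$ is of the same type as $h$,'' but the component in question is defined only as the fixed point of $\Phi_1$, i.e.\ $h=\Psi_1^{-1}(\beta\circ(I+h))$, so the closed-formula estimate does not apply directly (one would need $\beta\circ(I+h)$ to be $\theta$-H\"older, which presupposes what is being proved). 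Your invariant-set argument --- showing that the sup-norm-closed set $\{\varphi:\|\varphi\|_\infty\le\gamma,\ |\varphi|_\theta\le\Lambda\}$ is $\Phi_1$-invariant once $K\Lip(\beta)<1$, so that the sup-norm fixed point inherits the H\"older bound --- is a clean and correct way to close this gap, and is the one real addition your write-up makes over the published argument. The stated order of choosing the constants ($\gamma$, then $\theta<\theta_0$, then $\eps$, then $r$) correctly resolves the mild circularity that $\theta_0$ for $R=S$ depends on $\Lip(S^{\pm1})$, which tends to $\|T^{\pm1}\|$ as $\eps\to0$.
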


\begin{proof}
We first assume that $p = 0$. Put $T = DF_0$ and $\alpha = F - T$.
We have that $\alpha(0) = F(0) = 0$ and $D\alpha_0 = 0$.
By Theorem~\ref{t1} and the hypothesis on the fixed point $p$, the operator
$T$ is structurally stable. Hence, there exists $0 < \eps < \|T^{-1}\|^{-1}$
such that $T + \varphi$ is topologically conjugate to $T$ whenever
$\varphi : X \to X$ is a Lipschitz map with $\|\varphi\|_\infty \leq \eps$
and $\Lip(\varphi) \leq \eps$. Let $U$ be an open neighborhood of $0$ in $X$
such that $\Lip(\alpha|_U) < \frac{\eps}{3}\cdot$ By a classical result (see,
for example, Lemma~2 of \cite{JPal68}), there exists a bounded Lipschitz map
$\beta: X \to X$ such that $\beta|_U = \alpha|_U$,
$\|\beta\|_\infty \leq \eps$ and $\Lip(\beta) \leq \eps$.
Let $H : X \to X$ be a homeomorphism such that
$H \circ (T + \beta) = T \circ H$.
Then,
$$
H \circ F = DF_0 \circ H \ \text{ on } U,
$$
which proves the first assertion in Theorem~\ref{t2}. In order to prove
the second assertion, let $M = E_0^+$, $N = E_0^-$, $Y = M + T^{-1}(N)$,
$S = T + \beta$ and
$$
\Psi : \varphi \in U_b(X;Y) \mapsto \varphi \circ S - T \circ \varphi \in U_b(X).
$$
By Step~3 in the proof of Theorem~\ref{t1}, we can assume that $H$ has the
form $H = I + h$, where $h \in U_b(X;Y)$ is given by $h = \Psi^{-1}(-\beta)$.
Hence, by (\ref{eq4}),
\begin{equation}\label{Formula}
h(x) = - \sum_{k=0}^\infty T^kP_M(\beta(S^{-k-1}x)) +
         \sum_{k=1}^\infty T^{-k}P_N(\beta(S^{k-1}x)).
\end{equation}
By renorming $X$, if necessary, we may assume that
$$
\|T|_M\| < 1 \ \ \ \text{ and } \ \ \ \|T^{-1}|_N\| < 1.
$$
We assume $M \neq \{0\}$ and $N \neq \{0\}$, leaving the other cases to the
reader. Let
$$
0 < \theta < \min\Big\{-\frac{\ln \|T^{-1}|_N\|}{\ln \|T\|},\;
-\frac{\ln \|T|_M\|}{\ln \|T^{-1}\|}\Big\} \leq 1.
$$
Then
\begin{equation}\label{cara}
\max\big\{\|T|_M\| \|T^{-1}\|^\theta,\|T^{-1}|_N\| \|T\|^\theta\big\} < 1.
\end{equation}
Since $\beta : X \to X$ is a bounded Lipschitz map and $\theta \in (0,1]$,
$\beta$ is $\theta$-H\"older. More precisely, since
$\|\beta\|_\infty \leq \eps$ and $\Lip(\beta) \leq \eps$, we have that
\begin{equation}\label{beta}
\|\beta(x) - \beta(y)\| \leq 2\eps \|x - y\|^\theta
\ \ \text{ for all } x,y \in X.
\end{equation}
On the other hand,
\begin{equation}\label{Ineq1}
\|S^nx - S^ny\| \leq (\|T\| + \eps)^n \|x - y\| \ \ \text{ for all } x,y \in X.
\end{equation}
Since $S^{-1}x = T^{-1}x - T^{-1}(\beta(S^{-1}x))$, we deduce that
$$
\|S^{-1}x - S^{-1}y\| \leq \frac{\|T^{-1}\|}{1 - \|T^{-1}\| \eps}\,\|x - y\|
  = (\|T^{-1}\| + \eps\, s) \|x-y\|,
$$
where $s = \|T^{-1}\|^2/(1 - \|T^{-1}\| \eps) > 0$. Thus,
\begin{equation}\label{Ineq2}
\|S^{-n}x - S^{-n}y\| \leq (\|T^{-1}\| + \eps\, s)^n \|x - y\|
  \ \ \text{ for all } x,y \in X.
\end{equation}
Now, by using (\ref{Formula}), (\ref{beta}), (\ref{Ineq1}) and (\ref{Ineq2}),
we obtain
\begin{align*}
\|h(x) - h(y)\| &\leq
      \sum_{k=0}^\infty \|T^k P_M(\beta(S^{-k-1}x) - \beta(S^{-k-1}y))\|\\
& \ \ \ \ + \sum_{k=1}^\infty \|T^{-k} P_N(\beta(S^{k-1}x) - \beta(S^{k-1}y))\|\\
&\leq C \|x - y\|^\theta,
\end{align*}
where
$$
C = 2\eps \|P_M\| \sum_{k=0}^\infty \|T|_M\|^k (\|T^{-1}\| + \eps\, s)^{(k+1) \theta}
  + 2\eps \|P_N\| \sum_{k=1}^\infty \|T^{-1}|_N\|^k (\|T\| + \eps)^{(k-1)\theta}.
$$
It follows from (\ref{cara}) that $C$ is a finite constant provided we choose
$\eps > 0$ small enough. Hence, the map $h : X \to Y$ is $\theta$-H\"older.
We know from the proof of Theorem~\ref{t1} that $H^{-1} = I + h'$,
where $h'$ is of the same type as $h$. Thus, the map $h' : X \to Y$ is also
$\theta$-H\"older. By choosing $U$ so that both $U$ and $V = H(U)$ have
diameters $< 1$, we conclude that $H : U \to V$ is a $\theta$-H\"older
homeomorphism.

Now, suppose that $p \neq 0$ and consider the $C^1$ diffeomorphism
$G: X \to X$ defined by
$$
G(x) = F(x+p) - p \ \ \text{ for all } x \in X.
$$
Since $G(0) = 0$ and $DG_0 = DF_p$, there exist a homeomorphism
$K : X \to X$ and an open neighborhood $V$ of $0$ in $X$ such that
$$
K \circ G = DG_0 \circ K \ \text{ on } V.
$$
Consider the open neighborhood $U = p + V$ of $p$ in $X$ and the
homeomorphism $H : X \to X$ given by $H(y) = K(y - p)$ for all $y \in X$.
Then,
$$
H \circ F = DF_p \circ H \ \text{ on } U.
$$
Moreover, $H$ can be chosen to be $\theta$-H\"older on $U$ for $\theta > 0$
small enough.
\end{proof}

\begin{remark}
The fact that the linearization can be chosen to be locally $\theta$-H\"older
for small enough $\theta$ was proved in the case of a hyperbolic fixed point
in \cite{BR}.
\end{remark}

We close this work by proposing the following open problem:
{\it Does every infinite-dimensional (separable) Banach space support a
nonhyperbolic (strongly) structurally stable operator?}

%%%%%%%%%%%%%%%%%%%%%%%%%%%%%%%%%%%%%%%%%%%%%%%%%%%%%%%%%%%%%%%%%%%%%%%%%%%%%

\section*{Acknowledgement}

The first author was partially supported by project {\#}304207/2018-7
of CNPq (National Council for Scientific and Technological Development
-- Brazil) and by grant {\#}2017/22588-0 of FAPESP (S\~ao Paulo Research
Foundation -- Brazil).
The second author was partially supported by project {\#}307776/2015-8
of CNPq and by projects {\#}2013/24541-0 and {\#}2017/22588-0 of FAPESP.

%%%%%%%%%%%%%%%%%%%%%%%%%%%%%%%%%%%%%%%%%%%%%%%%%%%%%%%%%%%%%%%%%%%%%%%%%%%%%

\end{document}